\newcommand{\disp}{\displaystyle}
\newcommand{\nc}{\newcommand}
\nc{\G}{{\Gamma}} \nc{\BC}{{\mathbb C}} \nc{\BQ}{{\mathbb Q}}
\nc{\BR}{{\mathbb R}} \nc{\BZ}{{\mathbb Z}} \nc{\BP}{{\mathbb P}} \nc{\PC}{{\BP_1(\BC)}}
\nc{\BN}{{\mathbb N}} \nc{\BM}{{\mathbb M}}
\nc{\fH}{{\mathbb H}}
\nc{\mat}{{\binom{a\,\ b}{c\,\ d}}}
\nc{\U}{{\mathcal U}}
\nc{\PS}{{\mbox{PSL}_2(\BZ)}} \nc{\SL}{{\mbox{SL}_2(\BZ)}}
\nc{\SR}{{\mbox{SL}_2(\BR)}} \nc{\PR}{{\mbox{PSL}_2(\BR)}}
\nc{\SLC}{{\mbox{SL}_2(\BC)}}
\nc{\GL}{{\mbox{GL}}} \nc{\PQ}{{\mbox{PGL}_2^+(\BQ)}}
\nc{\GR}{{\mbox{GL}_2^+(\BR)}} \nc{\PG}{{\mbox{PGL}_2(\BC)}}
\nc{\GC}{{\mbox{GL}_2(\BC)}}
\nc{\f}{{\mathcal{F}(\fH)}}
\nc{\Cc}{\widehat{\BC}}
\nc{\e}{{E_{\rho}(\G)}}
\nc{\g}{{\gamma}}
\nc{\vm}{{V_{\rho}(\G)}}
\nc{\oo}{{\mathcal O}}
\nc{\M}{{\mbox{M}}}
\nc{\om}{{\omega}}
\nc{\Om}{{\Omega}}
\nc{\TX}{{\widetilde{X}}}
\nc{\ol}{\overline}
\nc{\cl}{{\mathcal L}}
\nc{\ce}{{\mathcal E}}
\nc{\la}{{\lambda}}
\nc{\La}{{\Lambda}}
\nc{\cz}{{\mathcal Z}}
\newtheorem{numbered}{}[section]
\newtheorem{thm}[numbered]{Theorem}
\newtheorem{lem}[numbered]{Lemma}
\newtheorem{remark}[numbered]{Remark}
\newtheorem{prop}[numbered]{Proposition}
\newtheorem{cor}[numbered]{Corollary}
\numberwithin{equation}{section}
\newcommand{\propref}[1]{Proposition~\ref{#1}}
\newcommand{\secref}[1]{\S\ref{#1}}
\newcommand{\lemref}[1]{Lemma~\ref{#1}}
\newcommand{\corref}[1]{Corollary~\ref{#1}}
\begin{document}

\title[]{Equivariant solutions to modular Schwarzian equations}
\author[]{Hicham Saber} \author[]{Abdellah Sebbar}
\address{Department of Mathematics, Faculty of Science, University of Ha'il,   Ha'il, Kingdom of Saudi Arabia}
\address{Department of Mathematics and Statistics, University of Ottawa,
	 Ottawa Ontario K1N 6N5 Canada}
\email{hicham.saber7@gmail.com}
\email{asebbar@uottawa.ca}
\subjclass[2010]{11F03, 11F11, 34M05.}
\keywords{Schwarz derivative, Modular forms, Eisenstein series, Equivariant functions, representations of the modular group, Fuchsian differential equations}
\maketitle
\maketitle
\begin{abstract} For every positive integer $r$, we solve the modular Schwarzian differential equation $\{h,\tau\}=2\pi^2r^2E_4$, where $E_4$ is the weight 4 Eisenstein series,  by means of equivariant functions on the upper half-plane. This paper supplements previous works \cite{forum, ramanujan}, where the same equation has been solved for infinite families of rational values of $r$. This also leads  to the solutions to the modular differential equation $y''+r^2\pi^2E_4\,y=0$ for every positive integer $r$. These solutions are quasi-modular forms for $\SL$ if $r$ is even or for the subgroup of index 2, $\SL^2$, if $r$ is odd.
\end{abstract}

\section{Introduction}

For a meromorphic function $f$ on a domain $D\subset\BC$, the Schwarz derivative is defined by
\[
\{f,z\}=\left(\frac{f''}{f'}\right)'-\frac{1}{2}\left(\frac{f''}{f'}\right)^2.
\]
It is protectively invariant in the sense that for two meromorphic functions
 $f$ and $g$ on $D$,
\[
\{f,z\}\,=\,\{g,z\} \mbox{ if and only if } g\,=\,\frac{a\,f+b}{c\,f+d} \mbox{ for some } \mat\in\GC.
\]
In fact, $\{f,z\}$ is the differential operator with the least order with this invariance property. Moreover, $\{f,z\}=0$ if and only if $f$ is a linear fractional transformation, that is an element of $\PG$. In addition, the Schwarz derivative behaves as a quadratic differential satisfying the cocyle condition
\[
\{f,z\}dz^2=\{f,w\}dw^2+\{w,z\}dz^2.
\]
Most of the functional properties satisfied by the Schwarz derivative  are derived from the following close connection with second order ordinary differential equations:\\
Let $R(z)$ be a meromorphic function on $D$, and let $y_1$ and $y_2$ be two linearly independent  local solutions to
\[
y''+R(z)y=0,
\]
then $h=y_2/y_1$ satisfies
\[
\{h,z\}=2R(z).
\]
The systematic study of the automorphic properties of the Schwarz derivative began in \cite{mathann}. If $D=\fH$, the upper half of the complex plane, and if $f$ is an automorphic function for a discrete subgroup $\G$ of $\SR$, then $\{f,\tau\}$, $\tau\in\fH$, is a weight 4 automorphic form for the normalizer of $\G$ in $\SR$. As an example, if $\lambda$ is the Klein modular elliptic function for the principal congruence group $\Gamma(2)$, and $E_4$ is the weight 4 Eisenstein series, then
\[
\{\lambda,\tau\}\,=\frac{\pi^2}{2}\,E_4(\tau).
\]
Conversely, if $f$ is a meromorphic function on $\fH$ such that $\{f,\tau\}$ is a weight 4 automorphic function for a discrete subgroup $\G$ of $\SR$, one can show that there exists a 2-dimensional complex representation $\rho$ of $\Gamma$ such that
\[
f(\gamma\cdot\tau)\,=\,\rho(\gamma)\cdot f(\tau)\,,\ \tau\in\fH\,,\ \gamma\in\G,
\]
where the action on both sides is by linear fractional transformation \cite{forum}. We say that $f$ is $\rho-$equivariant. If $\rho=1$ is constant then $f$ is an automorphic function (of weight 0), while if $\rho=\mbox{Id}$ is the embedding of $\G$ in $\SR$, we simply say that $f$ is an equivariant function, that is, it commutes with the action of $\G$.

To make the problem more precise, we suppose that $\G=\SL$ and that $\{f,\tau\}$ is a holomorphic weight 4 modular form. In other words, we are dealing  with the following equation
\[
\{f,\tau\}\,=\,s\,E_4(\tau),
\]
where $s$ is a complex parameter. With the help of the Frobenius method, it was established in \cite{forum} that for a solution $f$ to be either meromorphic or to have a logarithmic singularity at $\infty$, we must have $s=2\pi^2r^2$ where $r\in\BQ$. The paper {\em loc.\,cit.} determines exactly for which rational number $r$, the solution $f$ is a modular function for a finite index subgroup of $\SL$. In other words, looking at $f$ as a $\rho-$equivariant function, we seek $r\in\BQ$ such that $\ker \rho$ has a finite index in $\SL$. A necessary condition is that the representation $\rho$ has to be irreducible and thus its image, which is finite, can be classified explicitly. In the meantime, we have two coverings of compact Riemann surfaces ; one is given by
\[
f:X(\ker\rho)\longrightarrow X(\SL)
\] and the other is induced by the natural inclusion $\ker\rho\subseteq\SL$: \[
\pi:X(\ker\rho)\longrightarrow X(\SL)
.\]
 Here $X(\G)$ is the modular curve attached to the subgroup $\G$. Let $m$ and $n$ be the respective ramification indices above $\infty$ for the two coverings. Then  the Riemann-Hurwitz formulas for the two covering imply that $\ker\rho$ is a normal, genus zero and torsion-free subgroup of $\SL$, $s=\pi^2(m/n)^2$ and $\ker \rho=\G(n)$ for $2\leq n\leq 5$. 
 %Conversely, for any  integer $n$ with $2\leq n\leq 5$ and integer $m$ such that $\gcd(m,n)=1$, there is a modular function $f$ for $\G(n)$ such that  $\disp\{f,\tau\}=2\pi^2(m/n)^2E_4$.

In the meantime, the cases for which the solutions  to $\{f,\tau\}=sE_4$ correspond to reducible representations were determined in \cite{ramanujan}. This occurs precisely when $s=2\pi^2r^2$ with $r=m/6$ and $m\equiv 1\mod 12$. Moreover, if $m=12k+1$, then a solution is given by
\[
f=\int_i^{\tau}\,f_k(z)\,dz,
\]
where $f_k$ is the weight 2 modular form given in terms of the Dedekind eta-function and the $J-$function by
\[
f_k=\frac{\eta^4}{\prod_{i=1}^{k}(J-a_i)^2}.
\]
The coefficients $a_i$ satsify the algebraic system
\[
\frac{3}{1-a_i}-\frac{4}{a_i}=\sum_{j\neq i}\frac{12}{a_i-a_j}\,,\ \ 1\leq j\leq k,
\]
which has a unique solution $(a_1,\ldots,a_n)\in(0,1)^n$ up to a permutation.

In all the above cases, a solution $f$ to the Schwarzian equation $\{f,\tau\}=sE_4$ is necessarily locally univalent all over $\fH$ and thus leads to two linearly independent solutions to the modular differential equation
\[
y''+\frac{s}{2}E_4\,y=0
\]
given by $y_1=1/\sqrt{f'}$ and $y_2=f/\sqrt{f'}$. In particular, in the reducible case, a solution to
\[
y''+\pi^2\left(\frac{12k+1}{6}\right)^2\,E_4\,y\,=\,0
\]
is given by
\[
y\,=\,\eta^{-2}\,\prod_{i=1}^{k}(J-a_i),
\]
where the $a_i$'s, $1\leq i\leq k$, satisfy the above algebraic system. The differential equation $y''+(\pi^2/36) \,E_4\,y=0$, corresponding to the particular case $k=0$, was considered by Klein in \cite{klein} and Hurwitz wrote down a solution as $y=\eta^{-2}$ \cite{hurwitz}.

In this paper we shall describe how to solve the equation $\{f,\tau\}=2\pi^2r^2 E_4$ for every positive integer $r$. In particular we will show that it always admits an equivariant solution. In order to construct these solutions we focus on the ordinary differential equation
$y''+\pi^2r^2\,E_4\,y=0$ for $r\in\BN$ which turns out to admit solutions that are quasi-modular forms. To explicitly determine these solutions, we proceed as follows: Let $\G=\SL$ if $r$ is even and if $r$ is odd we let $\G=\SL^2$ which is the unique subgroup of index 2 in $\SL$ whose elements are the squares of elements of the modular group. In each case, $\G$ has genus 0, has one cusp, and is generated by two elliptic elements.
 We start with a weight $-2$ modular form $g$  that is holomorphic on $\fH$ and has a pole at infinity. Then the function defined by
 \[
 f_0(\tau)=\int_i^{\tau}\,g(z)E_4(z)\,dz
 \]
is necessarily a modular function for $\G$. Moreover, $f_1=\pi^2r^2f_0+g'$ is a quasi-modular form of weight 0 and depth 1 such that $F(\tau)=f_1''+\pi^2r^2f_1$ is
a weight 4 modular form that is holomorphic on $\fH$. Furthermore, one can prove that for each $r$, there exists  $g$ such that $F$ vanishes at $\infty$ and thus must be zero everywhere. In other words, $f$ is a solution to the  equation $y''+r^2\pi^2E_4y=0$. The construction of $g$ is carried out by solving a simple eigenvector problem for a triangular matrix of size $r$ if it is odd, and of size $r/2$ if $r$ is even. A second solution is given by $f_2(\tau)=-2g(\tau)+\tau f_1(\tau)$. Furthermore, a solution to the Schwarz equation $\{f,\tau\}=2\pi^2r^2E_4$ is given by
\[
f(\tau)\,=\,\tau-\frac{2g(\tau)}{f_1(\tau)}
\]
which is equivariant for $\G$. Finally, we provide examples to illustrate this method for small values of $r$ and we determine the explicit solutions to both differential equations.

It is worth mentioning that in recent years, the subject of modular differential equations has gained a lot of  interest both in number theory \cite{grabner,ka-ko,ka-za,ka-et-al,mason,mason-franc, nakaya} and in mathematical physics \cite{milas,mukhi} to mention a few.

\section{Some notations and definitions}\label{notation}
We recall some definitions and facts about classical modular forms that will be needed in this paper. The maine references are \cite{ram,rankin,shimura}.

Define the upper half-plane by \(\fH=\{\tau\in\BC\,\mid\,\mbox{Im}\,{\tau>0}\}.\)
The Eisenstein series $E_2$, $E_4$ and $E_6$ are defined by their $q-$expansions with $q=\exp(2\pi i\tau)$:
\begin{align*}
E_2(\tau)&=1-24\,\sum_{n\geq 1}\,\sigma_1(n)\,q^n\,,\\
E_4(\tau)&=1+240\,\sum_{n\geq 1}\,\sigma_3(n)\,q^n\,,\\
E_6(\tau)&=1-504\,\sum_{n\geq 1}\,\sigma_5(n)\,q^n\,.
\end{align*}
Here, the arithmetic function $\sigma_k(n)$ is defined as the sum of the $k-$th powers of the positive divisors of $n$. The functions $E_4$ and $E_6$ are modular forms of respective weights 4 and 6, while $E_2$ is a quasi-modular form of weight 2 and depth 1.

The Dedekind eta-function is defined by
\[
\eta(\tau)\,=\,q^{\frac{1}{24}}\,\prod_{k\geq1}(1-q^n)\,,
\]
and the discriminant $\Delta$, which is a cusp form of weight 12, is defined by
\[
\Delta(\tau)\,=\,\eta(\tau)^{24}\,=\,\frac{1}{1728}(E_4(\tau)^3-E_6(\tau)^2).
\]
The elliptic modular function $J$  is defined by
\[
J(\tau)\,=\,\frac{1}{1728}\frac{E_4(\tau)^3}{\Delta}.
\]
The Klein elliptic modular function $\lambda$ for $\G(2)$ is defined by
\[
\lambda(\tau)\,=\,\left(\frac{\eta(\tau/2)}{\eta(2\tau)}\right)^8.
\]
We also define the Jacobi theta-functions
\begin{align*}
	\theta_2(\tau)&=\sum_{n\in\BZ}\,q^{\frac{1}{2}(n+\frac12)^2},\\
	\theta_3(\tau)&=\sum_{n\in\BZ}\,q^{\frac{1}{2}n^2},\\
	\theta_4(\tau)&=\sum_{n\in\BZ}\,(-1)^nq^{\frac{1}{2}n^2}\,
\end{align*}
satisfying the Jacobi identity
\[
\theta_2^4+\theta_4^4=\theta_3^4.
\]
Finally, we have the Ramanujan identities:
\begin{align*}
	\frac{1}{2\pi i}\,\Delta'&=E_2\Delta\\
	\frac{1}{2\pi i}\,E_2'&=\frac{1}{12}(E_2^4-E_4)\\
	\frac{1}{2\pi i}\,E_4'&=\frac{1}{3}(E_2E_4-E_6)\\
	\frac{1}{2\pi i}\,E_6'&=\frac{1}{2}(E_2E_6-E_4^2).
\end{align*}

\section{Automorphic Schwarzian equations and equivariant functions}

For any pair $(\G,\rho)$ where $\G$ is a discrete subgroup of $\SR$ and $\rho$ a 2-dimensional complex representation of $\ol{\G}$, the image of $\G$ in $\PR$, the existence of $\rho-$equivariant functions was established in \cite{kyushu}.
Furthermore, if $\disp F=\binom{f_1}{f_2}$ is a vector-valued automorphic form for $\G$ of weight $k$ and multiplier system $\rho$, then $h_F=f_2/f_1$ is a $\rho-$equivariant function for $\G$ and the map $F\mapsto h_F$ is a surjective map from the set of $2-$dimensional vector-valued automorphic forms for $\G$ of arbitrary weight $k$ and multiplier system $\rho$ and the set of $\rho-$equivariant functions for $\G$ \cite{vvmf}.

 The case of equivariant functions, that is when the representation $\rho=\mbox{Id}$ is the defining representation for $\G$ is very interesting in its own. They will be the main focus of this paper as they will characterize the solutions to the Schwarzian equations under consideration. A trivial example is $h(\tau)=\tau$, and non-trivial examples can be constructed as follows: Let $f$ be an automorphic form of weight $k\in\BQ$ for $\G$, then
 \[
 h_f(\tau)=\tau +k\frac{f(\tau)}{f'(\tau)}
 \]
 is an equivariant function for $\G$. However, not every equivariant function for $\G$ arises in this way from an automorphic form \cite{rational}. Furthermore, it can be proven that the space of equivariant functions without the trivial element $h(\tau)=\tau$ is an infinite dimensional vector space with the zero element given by $h_{\Delta}$ \cite{structure}.

  Another interesting aspect involves the projectively  invariant cross-ratio
 defined for four distinct complex numbers $z_i$, $1\leq i\leq 4$ by
 \[
 [z_1,z_2,z_3,z_4]=\frac{(z_1-z_2)(z_4-z_3)}{(z_1-z_3)(z_4-z_2)}.
 \]
 In fact, the Schwarz derivative is the infinitesimal counterpart of the cross-ratio. Furthermore, by  definition, the cross-ratio of four distinct equivariant functions for $\G$ is an automorphic function for $\G$. As an illustration, one can check that
\[
[\tau,h_{\theta_2},h_{\theta_3},h_{\theta_4}]=\lambda
\]
and
\[
[\tau, h_{E_4},h_{\Delta},h_{E_6}]=J.
\]
Other arithmetical applications of equivariant functions can be found in \cite{critical}.

For the rest of this section, we suppose that $\G$ is the modular group $\SL$.  Let $I_2$ be the identity in $\SL$ and set
\[
T=\binom{1\ \ \ 1}{0\ \ \ 1}\,,\ \ S=\binom{0\  -1}{1\ \ \ 0}\,,\ \ P=ST=\binom{0\  -1}{1\ \ \ 1}.
\]
If $\gamma\in\GC$, we denote by $\ol{\gamma}$ its projection in $\PG$. We have $\ol{S}^2=\ol{P}^3=\ol{I}_2$,  $\ol{S}$ and $\ol{T}$ generate $\PS$, and so do $\ol{S}$ and $\ol{P}$. We now proceed to a reduction that will be useful in the following sections.

\begin{prop}
Let $h$ be a $\rho-$equivariant for $\PS$ such that $\rho(\ol{T})=\ol{T}$. Then there exists $L\in\GC$ such that $L\cdot h$ is equivariant for $\PS$. 	
\end{prop}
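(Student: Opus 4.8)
The plan is to recast the statement as a conjugacy statement for representations. Writing $\rho$-equivariance as $h(\gamma\cdot\tau)=\rho(\gamma)\cdot h(\tau)$ for $\gamma\in\PS$, one computes $(L\cdot h)(\gamma\cdot\tau)=\big(L\rho(\gamma)L^{-1}\big)\cdot(L\cdot h)(\tau)$, so $L\cdot h$ is equivariant for $\PS$ exactly when $L\rho(\gamma)L^{-1}=\gamma$ in $\PG$ for every $\gamma\in\PS$; that is, $L$ must conjugate $\rho$ into the defining representation $\mathrm{Id}$ of $\PS$. Since $\PS=\langle\ol S,\ol T\rangle$ and $\rho$ is a homomorphism, it suffices to achieve this on the two generators, and as $\rho(\ol T)=\ol T$ is already given, what remains is to produce $L\in\GC$ that commutes with $\ol T$ and satisfies $L\rho(\ol S)L^{-1}=\ol S$. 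The centralizer of $\ol T$ in $\PG$ being the one-parameter group of translations, $L$ is forced to be of the form $L=\binom{1\ \,t}{0\ \,1}$, and the whole problem reduces to choosing the right $t$.

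Next I would pin down $A:=\rho(\ol S)$ using only that $\rho$ is a genuine representation of $\PS$. From $\ol S^{2}=\ol I_2$ we get $A^{2}=\mathrm{id}$ in $\PG$; moreover $A\neq\mathrm{id}$, for otherwise $\rho(\ol P)=\rho(\ol S\ol T)=\ol T$ would have infinite order, contradicting $\ol P^{3}=\ol I_2$. Hence $A$ is a genuine involution of $\PG$ and therefore admits a trace-zero representative $A=\binom{\alpha\ \ \beta}{\gamma\ \ -\alpha}$ with $\alpha^{2}+\beta\gamma\neq0$. Likewise $\rho(\ol P)=A\ol T$ has order exactly $3$: it divides $3$ since $\ol P^{3}=\ol I_2$, and it is not $1$ because $A$ would then equal $\ol T^{-1}$, whose trace is $2\neq0$. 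A short computation gives $A\ol T=\binom{\alpha\ \ \ \alpha+\beta}{\gamma\ \ \ \gamma-\alpha}$, with trace $\gamma$ and determinant $-(\alpha^{2}+\beta\gamma)$; comparing with the fact that an order-$3$ element of $\PG$ satisfies $(\mathrm{tr})^{2}=\det$ after normalization yields the key identity $\alpha^{2}+\beta\gamma+\gamma^{2}=0$, and in particular $\gamma\neq0$.

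Finally I would verify that $L=\binom{1\ \ -\alpha/\gamma}{0\ \ \ \ 1}$ does the job. Writing $t=-\alpha/\gamma$, a direct multiplication gives
\[
L\,A\,L^{-1}=\binom{\alpha+t\gamma\qquad\ \beta-2t\alpha-t^{2}\gamma}{\gamma\qquad\qquad\ \ -\alpha-t\gamma},
\]
and the choice of $t$ annihilates both diagonal entries, while the identity $\alpha^{2}+\beta\gamma+\gamma^{2}=0$ forces the upper-right entry to equal $-\gamma$; hence $L\,A\,L^{-1}=\binom{0\ \ -\gamma}{\gamma\ \ \ \ 0}=\gamma\,\ol S=\ol S$ in $\PG$. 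Since $L$ also centralizes $\ol T$, we conclude $L\rho L^{-1}=\mathrm{Id}$ on all of $\PS$, so $L\cdot h$ is equivariant for $\PS$, as claimed.

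I expect the only real content to be the observation in the second paragraph that the homomorphism constraint $\rho(\ol P)^{3}=\mathrm{id}$ is \emph{precisely} the algebraic identity $\alpha^{2}+\beta\gamma+\gamma^{2}=0$ needed for a translation to conjugate $\rho(\ol S)$ to $\ol S$; everything else is bookkeeping, including the small degenerate-case checks ($A\neq\mathrm{id}$, $\gamma\neq0$, and that an involution of $\PG$ is diagonalizable and so has a trace-zero lift). It is also worth remarking that $h$ plays no role beyond making the expression $L\cdot h$ meaningful: the proposition is really a statement about conjugacy classes of homomorphisms $\PS\to\PG$ sending $\ol T$ to $\ol T$.
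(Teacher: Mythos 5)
Your proof is correct and follows essentially the same route as the paper's: both use $\ol{S}^2=\ol{I}_2$ to put $\rho(\ol S)$ into trace-zero form, use the order-3 condition on $\rho(\ol P)=\rho(\ol S)\ol T$ to pin down the entries, and then conjugate by a unipotent upper-triangular $L$. The only cosmetic difference is that you package the order-3 constraint as the single identity $(\mathrm{tr})^2=\det$, yielding $\alpha^2+\beta\gamma+\gamma^2=0$ (and hence $\gamma\neq 0$) in one stroke, whereas the paper normalizes the lift to $\SLC$ and cubes the relevant matrices explicitly, once to rule out $c=0$ and once after conjugating.
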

\begin{proof}
	Suppose that $h$ is a $\rho-$equivariant function such that $\rho(\ol{T})=\ol{T}$, that is,
	$h(\tau+1)=h(\tau)+1$ for $\tau\in\fH$.
	 Set $\rho(\ol{S})=\ol{M}$ for some $M\in \SLC$. Then we have $\rho(\ol{S^2})=\ol{I_2}=\ol{M^2}$. As $\det M=1$, we have  $M^2=\pm I_2$.
	
	If $M^2= I_2$, then $M=M^{-1}$, and as $M\in \SLC$, we should have  $M=\pm I_2$. Hence, $\rho(\ol{S})=\ol{M}=\ol{I_2}$. However,
	\[
	\ol{I_2}= \rho(\ol{ST})^3=\rho(\ol{T})^3=\ol{T^3}
	\]
	which is impossible. Therefore, we have
	\[\rho(\ol{S})=\ol{M} \mbox{ with } M^2= -I_2.
	\]
	
	Set $\displaystyle M=\mat$. Since $M^{-1}=-M\in \SLC$, we have $a=-d$ and $a^2+bc=-1$. If $\displaystyle N=MT=\binom{a\,\ a+b}{c\,\ c-a}$, then $\rho(\ol{P})=\ol{N}$ and so $N^3=\pm I_2$. We claim that $c\neq 0$, otherwise, we would have $a^2=-1$ and   $N^3=\binom{a^3\,\ *}{0\,\ -a^3} $, which implies that  $a^3=\pm 1$; a contradiction.

	If $\rho_1= L^{-1}\rho L$, with $\displaystyle L=\binom{1\,\ a/c}{0\ \ \ 1}$, then $L\cdot h$ is $\rho_1-$equivariant. Furthermore,
	\[\rho_1(\ol{T})=\ol{T}\  \mbox{ and }\  \rho_1(\ol{S})=\ol{M'} \ \mbox{ with }\ M'={\binom{0\ \ -1/c}{c \qquad \ 0}}.
	\]
Now let $\displaystyle N'=M'T=\binom{0\ \, -1/c}{c\ \qquad   c}$ so that $\rho_1(\ol{P})=\ol{N'}$. Then
\[ N'^3=\binom{-c\ \ \ \quad 1/c}{-c+c^2\ \ -c}= \pm I_2,
\]
and therefore $c=\pm1$, so that $\rho_1(\ol{S})=\ol{S}$. We conclude that $\rho_1$ is the defining representation of $\PS$, that is, $\rho_1(\gamma)=\gamma$ for all $\gamma\in\PS$ and that $L\cdot h$ is  equivariant for $\PS$.
	\end{proof}

\begin{cor}\label{cor2.2}
	Let $F$ be a weight 4 meromorphic modular form for $\SL$. If the equation
$\{h,\tau\}=F(\tau)$ has a solution such that $h(\tau+1)=h(\tau)+1$, then it has also a solution that is equivariant for $\SL$.	
\end{cor}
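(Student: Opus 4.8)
The plan is to reduce the statement entirely to the Proposition just proved, the only real work being to verify its hypotheses. Start from a solution $h$ of $\{h,\tau\}=F(\tau)$ satisfying $h(\tau+1)=h(\tau)+1$. First I would observe that $h$ is automatically equivariant for a projective representation of $\PS$. Indeed, for $\gamma=\mat\in\SL$ the chain rule for the Schwarz derivative together with $\{\gamma,\tau\}=0$ gives
\[
\{h\circ\gamma,\tau\}=\{h,\gamma\tau\}\,\gamma'(\tau)^2=\{h,\gamma\tau\}\,(c\tau+d)^{-4},
\]
and since $F$ has weight $4$ this equals $F(\gamma\tau)(c\tau+d)^{-4}=F(\tau)=\{h,\tau\}$. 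By the projective invariance of the Schwarz derivative recalled in the introduction, there is therefore $\rho(\ol\g)\in\PG$ with $h(\gamma\cdot\tau)=\rho(\ol\g)\cdot h(\tau)$; as in \cite{forum}, the right-hand side depends only on $\ol\g$ and the assignment $\ol\g\mapsto\rho(\ol\g)$ is forced to be multiplicative, so $\rho$ is a homomorphism on $\PS$ and $h$ is $\rho$-equivariant.

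Next I would translate the hypothesis $h(\tau+1)=h(\tau)+1$: it says precisely that $\rho(\ol T)$ is the translation $\tau\mapsto\tau+1$, i.e. $\rho(\ol T)=\ol T$. Thus $h$ meets the hypotheses of the Proposition, which supplies $L\in\GC$ such that $L\cdot h$ is equivariant for $\PS$.

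It then remains to check that $L\cdot h$ is still a solution of the same Schwarzian equation, which is immediate from the invariance of the Schwarz derivative under post-composition with linear fractional transformations: $\{L\cdot h,\tau\}=\{h,\tau\}=F(\tau)$. Finally, being equivariant for $\PS$ under the action by linear fractional transformations is the same as being equivariant for $\SL$, since $-I_2$ acts trivially; hence $L\cdot h$ is the desired $\SL$-equivariant solution.

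The argument is essentially bookkeeping once the Proposition is available; the one point needing care is the passage from ``the Schwarzians of $h$ and $h\circ\gamma$ agree'' to ``the comparison maps $\rho(\ol\g)$ form a homomorphism.'' This rests on the fact that $\{\cdot,\tau\}$ is the lowest-order operator with the stated projective invariance, so that $\rho(\ol\g)$ is uniquely determined by $\gamma$; uniqueness then forces $\rho(\ol\g_1\ol\g_2)=\rho(\ol\g_1)\rho(\ol\g_2)$ by comparing the two ways of evaluating $h((\gamma_1\gamma_2)\cdot\tau)$. Everything else in the deduction is formal.
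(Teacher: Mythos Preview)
Your argument is correct and follows essentially the same route as the paper's proof: establish that $h$ is $\rho$-equivariant with $\rho(\ol T)=\ol T$, invoke the Proposition to get $L$, and use $\{L\cdot h,\tau\}=\{h,\tau\}$. The paper's version is terser, taking the $\rho$-equivariance of $h$ as known (it is stated in the introduction and cited from \cite{forum}), whereas you spell out the chain-rule computation and the uniqueness argument for the homomorphism property; this is a welcome expansion but not a different approach.
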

\begin{proof}
If $h$ is a solution such that $h(\tau+1)=h(\tau)+1$, then it is
$\rho-$equivariant with $\rho(\ol{T})=\ol{T}$. From the above proposition, there exists $L\in\GC$ such that $L\cdot h$ is equivariant. The Corollary follows since $h$ and $L\cdot h$ have the same Schwarz derivative.
	\end{proof}

\section{Quasi-modular solutions }

We now focus on the differential equation
\begin{equation}\label{r-equ}
y''+ \pi^2 r^2\,E_4\,y=\,0,
\end{equation}
where $r$ is a positive integer. The associated Schwarzian equation is thus
\begin{equation}\label{s-equ}
\{h,\tau\}\,=\,2\pi^2r^2\,E_4(\tau).
\end{equation}
The purpose of this section is to show that \eqref{r-equ} has always a solution that is quasi-modular and that \eqref{s-equ} has always a solution that is equivariant.

We set $q=\exp(2\pi i\tau)$ and the equation \eqref{r-equ} takes the shape
\[
\frac{\mbox{d}^2 y}{\mbox{d}q^2}+\frac{1}{q}\frac{\mbox{d}y}{\mbox{d}q}-\frac{r^2}{4}\frac{E_4(q)}{q^2}y=0,
\]
with $q$ in the punctured disc $\{0<|q|<1\}$. As $E_4(q)=1+\rm{O}(q)$, this is a Fuchsian differential equation with a regular singular point at $q=0$. To write down two linearly independent solutions, we apply the Frobenius method: The indicial equation is given by $x^2-r^2/4=0$ with solutions $x_1=r/2>0$ and $x=-r/2<0$. Thus, $x_1-x_2=r\in\BZ$, hence two linearly independent solutions are given by
\begin{equation}\label{sol-y1}
	y_1(\tau)=q^{r/2}\,\sum_{n=0}^{\infty}\,a_n\,q^n\,,\ \mbox{ with } a_0\neq 0,
\end{equation}
and
\begin{equation}\label{sol-y2}
	y_2(\tau)= \tau y_1(\tau) + q^{-r/2}\,\sum_{n=0}^{\infty}\,b_n\,q^n\,,\ \mbox{ with } b_0\neq 0.
\end{equation}
Moreover, a solution to \eqref{s-equ} is given by
\begin{equation}\label{sol-log}
			h(\tau)=\frac{y_2(\tau)}{y_1(\tau)}=\tau\, +\, q^{-r}\,\sum_{n=0}^{\infty}\,c_nq^n\,,\ \mbox{ with } c_0\neq 0.
\end{equation}
\begin{prop}\label{hr}
	The equation \eqref{s-equ} has a solution $h_r$ that is equivariant for $\SL$.
\end{prop}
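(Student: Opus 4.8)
The plan is to deduce the proposition directly from \corref{cor2.2}: once we know that \eqref{s-equ} has \emph{some} solution $h$ with $h(\tau+1)=h(\tau)+1$, the corollary upgrades it to an equivariant one. So I first need to check that the $h$ appearing in \eqref{sol-log} really is a solution of \eqref{s-equ} on all of $\fH$, not merely a formal one near the cusp. Since $E_4$ is holomorphic on the simply connected domain $\fH$, the linear equation \eqref{r-equ} has two linearly independent holomorphic solutions $y_1,y_2$ on $\fH$, and by the correspondence between second order ODEs and the Schwarzian derivative recalled in the Introduction one has $\{y_2/y_1,\tau\}=2\pi^2r^2E_4$. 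From \eqref{sol-y1}, $y_1=q^{r/2}(a_0+\cdots)$ with $a_0\neq0$, so $y_1$ does not vanish near $\infty$ and $h=y_2/y_1$ is a genuine meromorphic (indeed locally univalent) function on $\fH$ solving \eqref{s-equ}.

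Next I would read off the translation behaviour of $h$. Dividing \eqref{sol-y2} by \eqref{sol-y1} gives
\[
h(\tau)=\frac{y_2(\tau)}{y_1(\tau)}=\tau+q^{-r}\,\frac{\sum_{n\ge0}b_nq^n}{\sum_{n\ge0}a_nq^n}=\tau+q^{-r}\sum_{n\ge0}c_nq^n,\qquad c_0=\frac{b_0}{a_0}\neq0,
\]
which is exactly the expansion \eqref{sol-log}. The second summand depends on $\tau$ only through $q=\exp(2\pi i\tau)$, hence is invariant under $\tau\mapsto\tau+1$; therefore $h(\tau+1)=h(\tau)+1$.

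Finally, $F:=2\pi^2r^2E_4$ is a holomorphic, and in particular meromorphic, weight $4$ modular form for $\SL$, and we have produced a solution of $\{h,\tau\}=F(\tau)$ satisfying $h(\tau+1)=h(\tau)+1$; \corref{cor2.2} then furnishes a solution $h_r$ of \eqref{s-equ} that is equivariant for $\SL$, which is the assertion. I do not expect a genuine obstacle in this argument — it is essentially bookkeeping on top of the Frobenius analysis carried out just above the statement. The only substantive input is that the second Frobenius solution has logarithmic part exactly $\tau\,y_1$ (equivalently $c_0\neq0$, so that the translation defect of $h$ is precisely $1$), since this is what guarantees $\rho(\ol{T})=\ol{T}$ and so places us in the setting of \corref{cor2.2}.
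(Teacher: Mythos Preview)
Your proof is correct and follows exactly the paper's route: from \eqref{sol-log} and the fact that $r\in\BZ$ one has $h(\tau+1)=h(\tau)+1$, and then \corref{cor2.2} applies. One small inaccuracy in your closing parenthetical: what guarantees the translation defect of $h$ equals $1$ is the presence (and normalization) of the $\tau\,y_1$ term in \eqref{sol-y2}, not the condition $c_0\neq 0$, which only concerns the leading coefficient of the \emph{periodic} part of $h$; this does not affect the argument itself.
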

\begin{proof}
	Since $r$ is an integer, the solution \eqref{sol-log} clearly satisfies $h(\tau+1)=h(\tau)+1$. Therefore, there exists a solution $h_r$ that is equivariant according to \corref{cor2.2}.
	\end{proof}
Recall that the modular group has a unique normal subgroup of index 2 denoted by $\SL^2$, which consists of the squares of the elements of $\SL$. In fact, $\SL^2=\langle P,Q\rangle$ with $P=\disp \binom{0\  -1}{1\ \quad \ 1}$ and $Q=S^{-1}PS=\disp \binom{1\  -1}{1\ \quad \ 0}$ both of order 6 in $\SL$, see \cite[page 16]{rankin}. In addition, $\SL^2$ is  a congruence group of level 2.
Its projection $\ol{\SL}^2$
is an index 2 normal subgroup of $\PS$ that is genus 0, has one cusp and two classes of elliptic  elements  $\ol{P}$ and $\ol{Q}$  both of order 3 in $\PS$.

 The following structure theorem provides solutions to \eqref{r-equ} that have specific modular properties.
\begin{thm} Let $r$ be a positive integer and let $\G=\SL$ if $r$ is even and $\G=\SL^2$ if $r$ is odd.
	 Then there exist two linearly independent solutions $f_1$ and $f_2$ to \eqref{r-equ} such that:
	\begin{enumerate}
		\item $f_1$ and $f_2$ have the following $q-$expansions
		\[
		f_1(\tau)=q^{r/2}\,\sum_{n\geq 0}\, \alpha_n q^n\,, \ \alpha_0\neq 0
		\]
		and
		\[
		f_2(\tau)=\tau f_1(\tau)+q^{-r/2}\,\sum_{n\geq 0}\, \beta_n q^n\,,\  \beta_0\neq 0.
		\]
		\item $f_1$ is a quasi-modular form  of weight zero and depth one for $\Gamma$ and $f_2(\tau)-\tau f_1(\tau)$ is a modular form of weight $-2$ for $\Gamma$.
		\item $h_r=f_2/f_1$ satisfies $h_r(\tau+1)=h_r(\tau)+1$. 
	\end{enumerate}	
\end{thm}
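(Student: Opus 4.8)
The plan is to produce $f_1$ directly in the form $f_1=\pi^2r^2f_0+g'$, where $g$ is a weight $-2$ meromorphic modular form for $\G$, holomorphic on $\fH$ and with a pole of order at most $r/2$ at the cusp of $\G$, and $f_0$ is a primitive of $gE_4$ normalized so that its constant Fourier coefficient vanishes. (The cusp of $\SL^2$ has width $2$, so the pole order is read in the local parameter $q_\G$ at the cusp, with $q_\G=q$ for $\SL$ and $q_\G=q^{1/2}$ for $\SL^2$.) Such $g$ form a space $V_r$ of dimension $r/2$ if $r$ is even and $r$ if $r$ is odd, described explicitly via $\Delta$ and $J$ (resp.\ $\eta^{12}$ and a Hauptmodul of $\SL^2$). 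Once $f_1$ is shown to solve \eqref{r-equ}, its companion is $f_2:=\tau f_1-2g$: since $f_0'=gE_4$ gives $f_1'=\pi^2r^2gE_4+g''$, one has $f_2''+\pi^2r^2E_4f_2=\tau\bigl(f_1''+\pi^2r^2E_4f_1\bigr)+2(f_1'-g''-\pi^2r^2E_4g)=0$, and $f_1,f_2$ are independent because $f_2-\tau f_1=-2g$ is a nonzero modular form, hence not a multiple of $f_1$. This already yields (2), with the weight $-2$ form $f_2-\tau f_1=-2g$; and, granting the shapes of the $q$-expansions in (1), property (3) follows since $h_r=f_2/f_1=\tau-2g/f_1=\tau+q^{-r}\cdot(\text{power series in }q)$ is invariant under $\tau\mapsto\tau+1$ up to that shift.

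Three of the four steps are formal. First, $gE_4$ is a weight $2$ meromorphic modular form for $\G$, holomorphic on $\fH$, and its constant Fourier coefficient is $0$: because $gE_4$ vanishes to the required order at the elliptic points of $\G$, it can be written as a polynomial in the Hauptmodul times $D$ of the Hauptmodul, where $D=\frac{1}{2\pi i}\frac{d}{d\tau}$, and $D$ of any $q_\G$-series has vanishing constant term. Hence $gE_4\,d\tau$ descends to a meromorphic differential on $X(\G)\cong\PC$ whose only pole is at the cusp and all of whose residues vanish (at the cusp because the constant coefficient is $0$; at the elliptic points automatically for differentials arising from weight $2$ forms), so it is exact and $f_0$ is a well-defined modular function for $\G$, a polynomial in the Hauptmodul. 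Second, from $g(\gamma\tau)=(c\tau+d)^{-2}g(\tau)$ together with $f_0(\gamma\tau)=f_0(\tau)$ one gets $f_1(\gamma\tau)=f_1(\tau)-\frac{2c}{c\tau+d}g(\tau)$, so $f_1$ is quasi-modular of weight $0$ and depth $1$ for $\G$. Third, differentiating this relation twice, using $\frac{d}{d\tau}\frac{c}{c\tau+d}=-\bigl(\frac{c}{c\tau+d}\bigr)^2$, gives $f_1''(\gamma\tau)=(c\tau+d)^4\bigl(f_1''+\frac{2c}{c\tau+d}(f_1'-g'')\bigr)$; substituting $f_1'-g''=\pi^2r^2gE_4$ and adding $\pi^2r^2E_4(\gamma\tau)f_1(\gamma\tau)=\pi^2r^2(c\tau+d)^4E_4\bigl(f_1-\frac{2c}{c\tau+d}g\bigr)$, the terms in $\frac{c}{c\tau+d}$ cancel and $F:=f_1''+\pi^2r^2E_4f_1$ satisfies $F(\gamma\tau)=(c\tau+d)^4F(\tau)$. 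Thus $F$ is a weight $4$ meromorphic modular form for $\G$, holomorphic on $\fH$.

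The real content is the choice of $g$. Since there are no nonzero weight $4$ cusp forms for $\G$ (the valence formula leaves no room: $S_4(\SL)=S_4(\SL^2)=0$), it suffices to choose $g\in V_r$ so that the principal part of $F$ at the cusp vanishes; then $F\in S_4(\G)$, so $F\equiv 0$ and $f_1$ solves \eqref{r-equ}. Take a basis $g_1,\dots,g_d$ of $V_r$, with $d=r/2$ or $r$, where $g_j$ has pole of order exactly $j$ in $q_\G$. A short computation with the indicial exponents $\pm r/2$ shows that the $q_\G^{-j}$-coefficient of $\pi^2r^2f_0+g'$ coming from $g_j$ is a nonzero multiple of $j^2-d^2$, so the most singular term cancels precisely when $j=d$; consequently, for $g=\sum\lambda_j g_j$, the requirement that the principal part of $f_1$ vanish (equivalently, given $S_4(\G)=0$, that $F\equiv 0$) is an upper-triangular linear system in $\lambda_1,\dots,\lambda_d$ with nonzero diagonal entries. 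Its solution space is one-dimensional and is spanned by a vector with $\lambda_d\ne 0$; for any nonzero $g$ in this line, $f_1$ is a solution of \eqref{r-equ} whose principal part, and then automatically whose constant term, at the cusp vanishes, and $f_1\not\equiv 0$ because a nonzero weight $-2$ modular form for $\G$ cannot solve \eqref{r-equ} (solutions transform with weight $-1$). I expect this step to be the main obstacle: one must verify that the diagonal entries never vanish for $j<d$, match the count of conditions against $\dim V_r$ correctly, and observe that the normalization of $f_0$ is essential — an uncorrected additive constant would reinstate a nonzero constant term in $F$.

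Finally, the $f_1$ just produced is a solution of \eqref{r-equ} that is holomorphic in $q_\G$ at the cusp with no logarithmic term, hence a nonzero scalar multiple of the Frobenius solution $y_1$ of \eqref{sol-y1}; this gives the $q$-expansion of $f_1$ in (1), after which $f_2=\tau f_1-2g$ has the shape \eqref{sol-y2}, $-2g=f_2-\tau f_1$ is the weight $-2$ form of (2), and (3) holds as noted. (Using the transformation laws of $f_1$ and $g$ one checks in addition that $h_r=\tau-2g/f_1$ is genuinely equivariant for $\G$; this is not needed for the statement, but it is the reason Proposition~\ref{hr} is relevant here.)
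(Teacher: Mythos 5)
Your argument is correct, but it reaches the theorem by a genuinely different route from the paper's own proof. The paper's proof is representation-theoretic: it starts from the Frobenius solutions \eqref{sol-y1}--\eqref{sol-y2}, forms $Y=\binom{y_2}{y_1}$, uses the equivariant solution of Proposition~\ref{hr} to write the monodromy as $\sigma(\gamma)=\chi(\gamma)L\gamma L^{-1}$ for a finite-order character $\chi$, and then reads off everything --- the $q$-expansions, $\chi(T)=(-1)^r$ and hence $\G=\ker\chi\in\{\SL,\SL^2\}$, and the quasi-modularity --- from the transformation law of the weight $-1$ vector-valued form $F=L^{-1}Y$. You instead build $f_1=\pi^2r^2f_0+g'$ and $f_2=\tau f_1-2g$ from a weight $-2$ form $g$ and kill the principal part of $f_1$ by a triangular linear system; this is exactly the content of the paper's Section 5 (Lemmas~\ref{singular} and \ref{int}, the proposition showing $f_1''+\pi^2r^2E_4f_1\in M_4'(\G)$, and the theorem with the matrix $B$), so in effect you derive the structure theorem from the explicit construction rather than the other way around. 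Your diagonal entries proportional to $j^2-d^2$ agree with the paper's $B_{kk}-1$, and your normalization of $f_0$ to have zero constant Fourier coefficient replaces the paper's a posteriori correction $F_1=f_1-c$. The paper's route shows that the pair $(\G,\chi)$ and the quasi-modular structure are \emph{forced} by the monodromy, with no computation and no appeal to $S_4(\G)=\{0\}$; your route is algorithmic, produces the explicit solutions, and makes the theorem independent of Proposition~\ref{hr} and the conjugation lemma of Section 3. Two points of wording should be repaired, though neither is a gap: the triangular system has nonzero diagonal entries only for $j<d$ --- the entry at $j=d$ vanishes, and that is precisely why the solution space is one-dimensional rather than zero-dimensional; and the vanishing of the constant term of $f_1$ is not ``automatic'' from the vanishing of the principal part but follows from your normalization of $f_0$ together with the fact that $g'$ has no constant term (you flag both facts elsewhere, so this is only a matter of phrasing).
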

\begin{proof}
	Let $y_1$, $y_2$ as in \eqref{sol-y1} and \eqref{sol-y2}, $h=y_2/y_1$, and $h_r$ as in \propref{hr}. Also, let $L\in\SLC$ such that $h=L\cdot h_r$.
	If $\displaystyle Y=\binom{y_2}{y_1}$ then, for $\disp \gamma=\binom{a\ \ b}{c\ \ d}\in\SL$, the components of $ (c\tau+d)^{-1}Y(\gamma\tau)$ are also
	solutions to \eqref{r-equ}. Therefore,
\begin{equation}\label{vaf-1}
Y(\gamma \tau)\,=\,(c\tau+d)^{-1}\,\sigma(\gamma)\,Y(\tau)\,,\ \tau\in\fH\,,\ \gamma=\binom{a\ \ b}{c\ \ d}\in\SL\,,
\end{equation}
where $\sigma$ is the monodromy representation of $\SL$.
It is clear that
\[
\ol{\sigma(\gamma)}=\ol{L\gamma L^{-1}}, \, \gamma\in\SL,
\]
which is equivalent to having
\[
\sigma(\gamma)= \chi(\gamma) L\gamma L^{-1}, \ \gamma\in\SL,
\]
where $\chi $ is a character of $\SL$.
Now, if we let $\displaystyle F=L^{-1}Y= \binom{f_2}{f_1}$, then we have
 \begin{equation}\label{vaf-2}
F(\gamma \tau)\,=\,(c\tau+d)^{-1}\,\chi(\gamma) \gamma\,F(\tau),\ \tau\in\fH,\ \gamma=\binom{a\ \ b}{c\ \ d}\in\SL.
\end{equation}
Hence, $F$ is a vector-valued modular form of weight -1. In particular, this implies that $\chi(-\gamma)=\chi(\gamma)$ and so $\chi$ descends to a character on $\PS$. Thus the order of $\chi$ divides 6 as the commutator subgroup of $\PS$  has index 6.
Now, set $\disp L^{-1}=\binom{a_1\ \ b_1}{c_1\ \ d_1}$ , then on one hand we have
\[
f_1=c_1y_2+d_1y_1= (c_1 \tau + d_1)y_1+ c_1\, q^{-r/2}\, \sum_{n=0}^{\infty}\,b_n\,q^n,
\]
and on the other hand, we have $\displaystyle F(\tau +1)= \chi(T) T\, F(\tau)$ which implies that $f_1(\tau +1)= \chi(T) \, f_1$, and so $f_1$ is a periodic function as $\chi$ has a finite order. It follows that $c_1$ must be equal to zero, and that
\[f_1(\tau)=d_1 y_1(\tau)=q^{r/2}\,\sum_{n\geq 0}\, \alpha_n q^n\, \ \alpha_0\neq 0.
\]
 Therefore, $\displaystyle \chi(T)=(-1)^r$.
If we set $\G=$Ker$(\chi)$, then $\G=\SL$ if $r$ is even, and $\G=\SL^2$ if $r$ is odd.
 Now let  $\gamma=\binom{a\ \ b}{c\ \ d}\in\G$ and $\tau\in\fH$, then from \eqref{vaf-2} we get
\[
f_1(\g\tau)=\frac{d}{c\tau+d}\, f_1(\tau) + \frac{c}{c\tau+d}\, f_2(\tau)\,=\,
 f_1(\tau) + \frac{d}{c\tau+d}(f_2(\tau)-\tau f_1(\tau)).
\]
 Therefore $f_1$ is a quasi-modular form of  weight zero and depth one for $\G$. As a consequence,  $f_3(\tau)= f_2(\tau)-\tau f_1(\tau)$ is a modular from of weight $-2$ for $\G$.
Furthermore, we have
 \[
 f_3(\tau)=f_2(\tau)-\tau f_1(\tau)= [(a_1-d_1)\tau +b_1]y_1 + a_1\,q^{-r/2} \sum_{n=0}^{\infty}\,b_n\,q^n.
\]
Since $f_3$ is a periodic function we must have $a_1=d_1$. As $\det L^{-1}=a_1d_1=1$, we may assume that $a_1=1$. Thus
\[
 f_3(\tau)= q^{-r/2}\sum_{n=0}^{\infty}\,b_n\,q^n\, + b_1 y_1= q^{-r/2} \sum_{n=0}^{\infty}\,\beta_n\,q^n\,,\ \ \beta_0\neq 0.
\]
Finally, it is clear that $h_r=f_2/f_1$ satisfies $h_r(\tau+1)=h(\tau)+1$.
\end{proof}

\section{Explicit  solutions}

In this section we will provide a simple algorithm to explicitly construct solutions to \eqref{r-equ}  and equivariant solutions to \eqref{s-equ} for each $n\in \BN$. Through this algorithm, each solution is obtained by solving an explicit linear system.

 Again, we let $\G=\SL$ if $r$ is even and $\G=\SL^2$ if $r$ is odd. For $k\in\BZ$, let $M'_k(\G)$  be the space of weight $k$ modular forms that are  holomorphic in $\fH$ and meromorphic at the cusp at $\infty$, and let $M_k(\G)$ (resp. $S_k(\G)$) be the subspace of those forms that are holomorphic at $\infty$ (resp. vanishing at $\infty$). Similarly, we denote by
 ${M'_k}^s(\G)$ and $M_k^s(\G)$ the corresponding   spaces of quasi-modular forms for $\G$ of weight $k$ and depth $s$.

We start with some elementary facts about $M'_{-2}(\G)$ and $M'_{2}(\G)$ that will be useful to our construction. If  $f\in M'_{-2}(\G)$ with a pole of order $n$, then $\Delta^n f\in M_{12n-2}(\G)$ in case $\G=\SL$, and $\Delta^{n/2}f\in M_{6n-2}(\G)$ when $\G=\SL^2$. Moreover, any element in $\M'_{-2}(\G)$ has a Fourier expansion in $p=q=\exp(2\pi i\tau)$ if $\G=\SL$ and in $p=q^{1/2}=\exp(\pi i\tau)$ if $\G=\SL^2$.
\begin{lem}\label{singular}
	With the above notations, we have
	\[
	\{\,\mbox{Singular part of }f\,|\, f\in\M'_{-2}(\G)\}=\BC[p^{-1}].
	\]
\end{lem}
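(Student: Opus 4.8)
The plan is to prove the two inclusions separately, the nontrivial one being that every polynomial in $p^{-1}$ with no constant term (equivalently, every element of $\BC[p^{-1}]$ modulo the holomorphic part) arises as the singular part of some $f\in\M'_{-2}(\G)$. The inclusion ``$\subseteq$'' is immediate: if $f\in\M'_{-2}(\G)$ then $f$ is holomorphic on $\fH$ and meromorphic at the single cusp, so its Fourier expansion in $p$ (which is $q$ when $\G=\SL$ and $q^{1/2}$ when $\G=\SL^2$, as recalled just before the lemma) has the form $\sum_{n\geq -N}c_np^n$ for some $N$, whence its singular part lies in $p^{-1}\BC[p^{-1}]\subseteq\BC[p^{-1}]$. (One should note that the constant term is allowed too, but the statement as written identifies the singular-part space with $\BC[p^{-1}]$; I would phrase the proof so that ``singular part'' means the principal part $\sum_{n<0}c_np^n$ together with the freedom to add any constant, i.e. the image of $\M'_{-2}(\G)$ in $\BC((p))/p\BC[[p]]$, matching the right-hand side.)

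For the reverse inclusion I would argue by producing, for each $N\geq 1$, an element of $\M'_{-2}(\G)$ whose pole at $\infty$ has order exactly $N$; since the singular parts then form a subspace of $\BC[p^{-1}]$ containing elements of every degree, and since adding a constant is free (the constant $0$ function's ``singular part'' is $0$, and we can always adjust lower-order coefficients), a triangular/degree argument gives all of $\BC[p^{-1}]$. Concretely: the map $f\mapsto \Delta^{\lceil N'\rceil}f$ described before the lemma identifies $\M'_{-2}(\G)$ with a space of holomorphic modular forms of large weight, so it suffices to exhibit holomorphic modular forms of the relevant weights for $\G$ that are nonzero at $\infty$. For $\G=\SL$ this is classical: $M_k(\SL)$ contains a form nonvanishing at $\infty$ (a suitable monomial in $E_4,E_6$) for every $k\in\{0,4,6,8,\dots\}$, and $12N-2$ is always of this form for $N\geq 1$, so $\Delta^{-N}\cdot(\text{that form})\in\M'_{-2}(\SL)$ has a pole of order exactly $N$. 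For $\G=\SL^2$, which is a genus-zero group with one cusp and two elliptic classes of order $3$, the graded ring of holomorphic modular forms is again finitely generated and contains forms of every sufficiently large even weight nonvanishing at the cusp; combined with the available low weights one checks $M_{6N-2}(\SL^2)$ contains a form nonzero at $\infty$ for every $N\geq 1$ (the Hilbert series of the ring makes the weights $6N-2$ attainable, e.g. using that $M_k(\SL)\subseteq M_k(\SL^2)$ already covers the even multiples of the needed shape, and an extra generator handles the rest).

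The main obstacle is the bookkeeping for $\G=\SL^2$: unlike $\SL$, its ring of modular forms and the precise set of weights in which a cusp-nonvanishing form exists are less standard, so I would want to either (i) cite the explicit description of $M_*(\SL^2)$ from a reference such as \cite{rankin}, or (ii) bypass the ring entirely by noting that since $\ol{\SL}^2$ has genus $0$ with a single cusp, the field of modular functions is a rational function field $\BC(t)$ in a Hauptmodul $t$ having a simple pole at $\infty$; then for any $N$, $t^N$ times a fixed nonzero weight $-2$ meromorphic form whose only pole in $\fH$ has been cleared produces poles of the desired orders, and a linear-algebra (Vandermonde-type) argument on the leading coefficients shows the singular parts span $\BC[p^{-1}]$. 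Approach (ii) is cleaner because it uses only the stated facts that $\G$ has genus $0$, one cusp, and that $\Delta^{n/2}f$ is a genuine holomorphic modular form, so I would present that as the main line and relegate the ring-theoretic version to a remark.
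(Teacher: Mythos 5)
Your preferred line of argument (your ``approach (ii)'') is essentially the paper's proof: it takes a Hauptmodul $t$ normalized so that $t=p^{-1}+\mbox{O}(p)$ together with a fixed $t_0\in\M'_{-2}(\G)$ having a simple pole at $\infty$ (made explicit as $E_4E_6/\Delta$ for $\SL$ and $E_4/\Delta^{1/2}$ for $\SL^2$, which disposes of the $\SL^2$ bookkeeping you were worried about), and realizes any prescribed principal part as $f=P(t)t_0$ by a triangular choice of the coefficients of the polynomial $P$. Your parenthetical caveat about the constant term is well taken: the construction only controls the expansion up to $\mbox{O}(1)$, which is exactly the form in which the lemma is invoked later in the paper.
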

\begin{proof}
	We need to show that for each finite set $\{a_n\in\BC\,|\,n_0\leq n\leq -1\}$, there exists $f\in \M'_{-2}(\G)$ such that
	\[
	f(\tau)\,=\,\sum_{n=n_0}^{-1}\,a_np^n\, +\, \mbox{O}(p).
	\]
	As $\G$ has genus zero, it has a Hauptmodul $t$ that can be normalized to have a simple pole at $\infty$ and the expansion
	\[
	t(\tau)\,=\,\frac{1}{p}\,+\,\mbox{O}(p).
	\]
	Start with $t_0(\tau)=E_4E_6/\Delta\in \M'_{-2}(\G)$ if $\G=\SL$ or $t_0(\tau)=E_4/\Delta^{1/2}\in \M'_{-2}(\G)$ if $\G=\SL^2$ which both have a simple pole at $\infty$. One can choose a polynomial $P\in\BC[X]$ (of degree $-n_0-1$) such that
	\[
	P(t)t_0(\tau)\,=\,\sum_{n=n_0}^{-1}\,a_np^n\, +\, \mbox{O}(p),
	\]
	and we take $f=P(t)t_0\in M_{-2}(\G)$.
	\end{proof}
\begin{lem}\label{int}
	Every $f\in M'_2(\G)$ is the derivative of a modular function (of weight 0) for $\G$.
\end{lem}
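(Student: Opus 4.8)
The plan is to construct the modular function simply by integrating $f$ and then to verify its invariance directly. Fix a base point $\tau_0\in\fH$ and set
\[
F(\tau)=\int_{\tau_0}^{\tau}f(z)\,dz .
\]
Since $f$ is holomorphic on the simply connected domain $\fH$, this is a well defined holomorphic function on $\fH$ with $F'=f$ (here $'=d/d\tau$). For $\gamma=\mat\in\G$ we have $\frac{d}{d\tau}\frac{a\tau+b}{c\tau+d}=(c\tau+d)^{-2}$ (using $\det\gamma=1$), and because $f$ has weight $2$ we have $f(\gamma\tau)=(c\tau+d)^2f(\tau)$; hence the $\tau$-derivative of $F(\gamma\tau)-F(\tau)$ equals $f(\gamma\tau)(c\tau+d)^{-2}-f(\tau)=0$, so $F(\gamma\tau)-F(\tau)$ is a constant $\varphi(\gamma)$. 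The usual cocycle computation then shows that $\gamma\mapsto\varphi(\gamma)$ is a group homomorphism from $\G$ into the additive group $(\BC,+)$.

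The key point is that this period homomorphism is identically zero. Indeed $\G$ is generated by two elliptic elements — $S$ and $P$ when $\G=\SL$, and $P$ and $Q$ when $\G=\SL^2$ — each of finite order, and any homomorphism sending a finite-order element into the torsion-free group $(\BC,+)$ must annihilate it; hence $\varphi\equiv 0$ and $F$ is $\G$-invariant. In particular $F$ is invariant under the parabolic generator of the cusp stabilizer ($T$ if $\G=\SL$, $T^2$ if $\G=\SL^2$), so $F$ is periodic of width $h$ and admits a Laurent expansion $F=\sum_{n\in\BZ}d_n p^n$ in the local parameter $p$ at the cusp. Differentiating, $f=F'=\tfrac{2\pi i}{h}\sum_n n\,d_n\,p^n$; comparing with the $p$-expansion of $f$ and using that $f\in M'_2(\G)$ is meromorphic at the cusp forces $d_n=0$ for every $n$ strictly below the order of the pole of $f$, so $F$ is meromorphic at $\infty$. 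Thus $F$ is $\G$-invariant, holomorphic on $\fH$ and meromorphic at the cusp, i.e.\ a modular function of weight $0$ for $\G$, and $f=F'$, which is the assertion.

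The only nontrivial ingredient is the vanishing of $\varphi$, which relies on the fact — established earlier — that the relevant groups are generated by torsion. Two remarks: the computation above shows, automatically, that the constant term of the $p$-expansion of $f$ vanishes (which is precisely why integration produces no logarithmic term), and intrinsically this is the residue theorem applied to the meromorphic differential $f\,d\tau$ on the genus-zero curve $X(\G)$, whose only pole is at the cusp; also, taking $f=g\,E_4$ with $g\in M'_{-2}(\G)$, this lemma is exactly what guarantees that the function $\int_i^\tau g(z)E_4(z)\,dz$ occurring in the construction is a modular function for $\G$.
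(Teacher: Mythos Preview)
Your proof is correct and follows essentially the same route as the paper: define $F$ as an antiderivative of $f$, observe that $\gamma\mapsto F(\gamma\tau)-F(\tau)$ is an additive character of $\G$, and kill it using that $\G$ is generated by elliptic elements. You are in fact slightly more careful than the paper, which omits the explicit check of meromorphicity at the cusp that you supply via the $p$-expansion.
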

\begin{proof}
	Let $f\in M'_2(\G)$. Since $f$ is holomorphic in $\fH$ which is simply connected, the following integral is well defined
	\[
	F(\tau)=\int_i^{\tau}f(z)dz,
	\]
	where the integration is over any path joining $i$ and $\tau$. Then
	for $\gamma\in\G$,
	\[
	F(\gamma\tau)=\int_{i}^{\gamma\tau}f(z)dz=F(\tau)+\int_i^{\gamma i}f(z)dz
	\]
	since $f(z)dz$ is a $\Gamma-$invariant differential. Furthermore, the map
	$\disp\gamma\mapsto \int_i^{\gamma i}f(z)dz$ is  an additive character of $\G$ which must be trivial  since both $\SL$ and $\SL^2$ are generated by elliptic elements. It follows that $F(z)\in M'_0(\G)$.
	\end{proof}
\begin{remark}{\rm
	It follows from this lemma that every element in $ M'_2(\G)$ has a zero constant term in its Fourier expansion.}
\end{remark}

Fix an element $g\in M'_{-2}(\G)$ and  set
\[f_0(\tau)= \int_{i}^{\tau} \,g(z)  E_4(z) dz \,,\ \tau\in\fH.
\]
 According to \lemref{int}, $f_0$ is a modular function for $\G$.
 \begin{prop}
Let $f_1= \pi^2r^2f_0+ g'$. Then
$f_1''+\pi^2r^2E_4f_1$ belongs to $M'_4(\G)$.
 \end{prop}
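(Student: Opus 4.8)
The plan is to expand $F:=f_1''+\pi^2r^2E_4\,f_1$ explicitly in terms of $g$ and $f_0$, and then to recognise the result as a sum of three terms, each of which manifestly lies in $M'_4(\G)$. By construction $f_0'=gE_4$, so differentiating $f_1=\pi^2r^2f_0+g'$ twice gives $f_1''=\pi^2r^2(g'E_4+gE_4')+g'''$; substituting this into $F$ and collecting terms produces
\[
F=g'''+\pi^2r^2(2g'E_4+gE_4')+\pi^4r^4E_4f_0 .
\]
All three summands are holomorphic on $\fH$ and meromorphic at $\infty$ (everything in sight has a Fourier expansion at $\infty$ with only finitely many terms of negative order), so the real content is to check that each piece transforms with weight $4$ under $\G$.

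For the last summand this is immediate: $E_4$ is a holomorphic weight-$4$ modular form and $f_0$ is a modular function (weight $0$) for $\G$ by \lemref{int}, so $E_4f_0$ has weight $4$ for $\G$. For $g'''$ I would invoke Bol's identity: since $g\in M'_{-2}(\G)$ has weight $-2$, its third derivative transforms like a modular form of weight $4$ under $\G$ — alternatively this falls out of differentiating $g(\gamma\tau)=(c\tau+d)^{-2}g(\tau)$ three times and watching the correction terms telescope away. For the middle summand I would note that $g^2E_4$ has weight $(-2)+(-2)+4=0$, hence is a modular function for $\G$; therefore its derivative transforms with weight $2$ for $\G$ (a weight-$0$ form acquires no quasi-modular correction upon differentiation), and since $(g^2E_4)'=g\,(2g'E_4+gE_4')$, dividing by $g$ shows that $2g'E_4+gE_4'$ has weight $4$ for $\G$. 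Summing the three contributions gives $F\in M'_4(\G)$.

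The step I expect to be the crux is the middle term. Since $f_1$ is only a quasi-modular form of weight $0$ and depth $1$, a priori $f_1''$ carries quasi-modular correction terms of depths up to $3$ and $E_4f_1$ carries one of depth $1$; the substance of the proposition is exactly that all of these corrections cancel, and the reason they do is the identity $f_1'=\pi^2r^2gE_4+g''$, i.e. ultimately $f_0'=gE_4$. Recasting the middle term through ``$g^2E_4$ is of weight $0$'' is the cleanest way to make this cancellation transparent; one could equally well verify it by a direct, if longer, chain-rule computation of $f_1''(\gamma\tau)$. Apart from that one point, the argument is routine bookkeeping with transformation laws.
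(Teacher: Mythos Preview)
Your proof is correct and takes a genuinely different route from the paper's. The paper works entirely within the quasi-modular formalism: it records that $f_1$ has quasi-modular polynomial $P_{f_1}(X)=f_1-2gX$, asserts that $f_1''$ (a priori of depth~$3$) in fact has the depth-$1$ polynomial $P_{f_1''}(X)=f_1''+2\pi^2r^2E_4\,gX$, and observes that adding $\pi^2r^2E_4\cdot P_{f_1}(X)$ kills the linear term in $X$, so the sum is honestly modular of weight~$4$. You instead expand $F$ explicitly and split it into three pieces, each shown to be weight-$4$ modular on its own: $g'''$ via Bol's identity, $E_4f_0$ trivially, and $2g'E_4+gE_4'$ via the neat identity $2g'E_4+gE_4'=(g^2E_4)'/g$ with $g^2E_4$ of weight~$0$.

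What each approach buys: yours is more elementary and self-contained---it never mentions quasi-modular polynomials and would be readable by someone who knows only classical modular forms and Bol's lemma---and it has the side benefit of displaying $F$ as an explicit sum of modular terms. The paper's argument, on the other hand, foregrounds the depth cancellation as the conceptual point (which is what the authors want to use downstream) and avoids having to spot the $g^2E_4$ trick or invoke Bol. Both are short once one sees the key step; they simply organise the same cancellation differently.
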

 \begin{proof}
 	One can check the modularity of $f_1$ by direct calculations. Also, one can
 	easily check that $f_1$ is a quasi-modular form of weight0 and depth 1 with quasi-modular polynomial $\disp P_{f_1}(X)=f_1-2gX$. In addition, $f_1''$ is a quasi-modular form of weight 4 and depth 3 but with a simple quasi-modular polynomial given by $\disp P_{f_1''}(X)=f_1''+2\pi^2r^2E_4gX$, and this readily shows that
 	$f_1''+\pi^2r^2E_4f_1\in M'_4(\G)$.
 	\end{proof}

In what follows, we will show that for each $r$, one can choose $g\in M'_2(\G)$ such that $f_1''+\pi^2r^2E_4f_1$ vanishes at infinity.
As $S_4(\G)$ is trivial for $\G=\SL$ or $\G=\SL^2$, we then  have a solution $f_1$ to \eqref{r-equ}. In the meantime, define
\begin{equation}\label{f2}
f_2(\tau)=-2g(\tau)+\tau f_1(\tau).
\end{equation}
Then
\[
f_2''-\tau f_1''=2(-g''+f_1')=2\pi^2r^2 gE_4=-\pi^2r^2 E_4(f_2-\tau f_1),
\]
so that
\begin{equation}\label{f1-f2}
f_2''+\pi^2r^2E_4f_2=\tau(f_1''+\pi^2r^2E_4f_1).
\end{equation}
 Thus, if $f_1$ is a solution to \eqref{r-equ}, then so is $f_2$. In addition, $f_1$ and $f_2$ are linearly independent, and we can take $\displaystyle h_r= f_2/f_1$ as a solution to \eqref{s-equ}.

\begin{thm}
	For each $r\in\BN$, one can construct $g\in M_{-2}'(\G)$ such that $f_1$ and $f_2$ defined above are two linearly independent solutions to \eqref{r-equ}.
\end{thm}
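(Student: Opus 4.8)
The plan is to turn the statement into a finite linear–algebra problem. Write $\mu=r/2$ and work in $p=q$ when $r$ is even (so $\G=\SL$), and $\mu=r$ and work in $p=q^{1/2}$ when $r$ is odd (so $\G=\SL^2$); note $p^{-\mu}=q^{-r/2}$ in both cases. By \lemref{singular} we may choose, for each $a=(a_{-1},\dots,a_{-\mu})\in\BC^{\mu}$, an element $g=g(a)\in M'_{-2}(\G)$ with singular part $\sum_{m=1}^{\mu}a_{-m}p^{-m}$ depending linearly on $a$ (fix a linear section of the singular–part map). Normalising $f_{0}$ to be the antiderivative of $gE_{4}$ with vanishing constant term — which changes nothing in \eqref{f1-f2} since $f_{0}'=gE_{4}$ is unaffected — both $f_{1}=\pi^{2}r^{2}f_{0}+g'$ and $F=f_{1}''+\pi^{2}r^{2}E_{4}f_{1}$ are determined linearly by $a$. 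Since $F\in M'_{4}(\G)$ and $S_{4}(\G)=0$, it suffices to produce $a\neq 0$ with $F$ vanishing at $\infty$: then $f_{1}$ solves \eqref{r-equ}, $f_{2}=-2g+\tau f_{1}$ solves it by \eqref{f1-f2}, $f_{1}\neq 0$ (otherwise $g$ itself would solve \eqref{r-equ}, and being a weight $-2$ modular form it is periodic, hence a multiple of $y_{1}$, contradicting that $g$ has a pole), and $f_{1},f_{2}$ are independent because $f_{2}-\tau f_{1}=-2g\neq 0$ while $f_{1}$ is periodic and $\tau f_{1}$ is not.

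The core computation is to expand $F$ in powers of $p$. Using $f_{0}'=gE_{4}$ one gets $F=2\pi^{2}r^{2}E_{4}g'+\pi^{2}r^{2}E_{4}'g+g'''+\pi^{4}r^{4}E_{4}f_{0}$, and since $E_{4}=1+\mathrm{O}(p)$, $E_{4}'=\mathrm{O}(p)$, antiderivatives preserve the order of $p$, and $gE_{4}$ has no constant term, the coefficient of $p^{j}$ in $F$ for $j<0$ depends only on the singular part of $g$; a direct calculation shows that $a_{j}$ occurs in it with coefficient
\[
\frac{(\kappa^{2}j^{2}+\pi^{2}r^{2})^{2}}{\kappa j},\qquad \kappa=2\pi i\ \ (\G=\SL),\quad \kappa=\pi i\ \ (\G=\SL^{2}),
\]
and every remaining contribution involves only $a_{i}$ with $i<j$. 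Two facts drop out. First, at $j=-\mu$ one has $\kappa^{2}\mu^{2}+\pi^{2}r^{2}=0$ (this is exactly the choice $p^{-\mu}=q^{-r/2}$), so the most singular coefficient of $F$ vanishes identically. Second, for $-\mu<j<0$ one has $\kappa^{2}j^{2}+\pi^{2}r^{2}\neq0$: it equals $\pi^{2}(r-2j)(r+2j)$ with $0<|2j|<r$ when $\G=\SL$, and $\pi^{2}(r-j)(r+j)$ with $0<|j|<r$ when $\G=\SL^{2}$. Hence the $\mu$ equations "coefficient of $p^{j}$ in $F$ equals $0$", $-\mu\le j\le-1$, form a triangular system in $(a_{-1},\dots,a_{-\mu})$ whose matrix has exactly one zero on its diagonal, in the row $j=-\mu$; equivalently, $0$ is an eigenvalue of this triangular matrix — the ``eigenvector problem for a triangular matrix of size $\mu$'' of the introduction — and any eigenvector $a$ has $a_{-\mu}\neq0$, giving a genuine $g$ with a pole of order $\mu$.

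It remains to check that once the polar part of $F$ has been killed, $F$ actually vanishes at $\infty$. Since $S_{4}(\G)=0$ forces $\dim M_{4}(\G)=1$, a polar–free $F$ equals $c\,E_{4}$ for some constant $c$, so $f_{1}$ solves the inhomogeneous equation $y''+\pi^{2}r^{2}E_{4}y=cE_{4}$; its solutions are $c/(\pi^{2}r^{2})$ plus a solution of \eqref{r-equ}, and since $f_{1}$ is periodic (built from the modular function $f_{0}$ and from $g'$) the homogeneous part is a multiple of $y_{1}$, which vanishes at $\infty$. Comparing constant Fourier coefficients and using $(f_{0})_{0}=0$ gives $c=0$, i.e. $F\equiv0$. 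Running the above for the chosen eigenvector $a$ then yields $g$, $f_{1}$ and $f_{2}$ as required.

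I expect the main obstacle to be the bookkeeping in the expansion of $F$: reorganising $2\pi^{2}r^{2}E_{4}g'+\pi^{2}r^{2}E_{4}'g+g'''+\pi^{4}r^{4}E_{4}f_{0}$ so that the diagonal coefficient collapses to the clean square $(\kappa^{2}j^{2}+\pi^{2}r^{2})^{2}/\kappa j$, and verifying uniformly in the two cases that this is the only way $a_{j}$ enters the $p^{j}$–coefficient. Once that identity is in hand, the automatic vanishing at $j=-\mu$, the triangular solve, and the final $c=0$ argument are routine.
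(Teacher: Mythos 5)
Your proposal is correct, and the verified identity $\disp\frac{(\kappa^2j^2+\pi^2r^2)^2}{\kappa j}$ for the diagonal coefficient does check out, but you set up a different linear system from the paper's. The paper does not expand $F=f_1''+\pi^2r^2E_4f_1$ at all: it kills the singular part of $f_1$ itself, whose $p^n$-coefficient is simply $\frac{\pi^2r^2}{\alpha n}\sum_{s+t=n}a_sb_t+\alpha n\,a_n$ by \eqref{coeff}, giving the eigenvector problem $BX=X$ for an upper-triangular $B$ with diagonal entries $-\pi^2r^2/\alpha^2k^2$, exactly one of which equals $1$ (at $k=-n_0$). Once $f_1$ is holomorphic at $\infty$, $f_1''+\pi^2r^2E_4f_1$ has value $\pi^2r^2c$ there with $c=f_1(\infty)$, and the paper disposes of $c$ by brute force: replace $f_1$ by $F_1=f_1-c$, note $F_1''+\pi^2r^2E_4F_1\in S_4(\G)=\{0\}$, and adjust $f_2$ accordingly. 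Your route instead kills the polar part of $F$ directly (a four-term bookkeeping exercise rather than a two-term one, though the resulting system is equivalent -- indeed your own endgame shows that a polar-free $F$ forces $f_1=c/\pi^2r^2+\lambda y_1$, hence a polar-free $f_1$), and then rules out $F=cE_4$ with $c\neq0$ by a periodicity/monodromy argument together with the normalization $(f_0)_0=0$. What the paper's version buys is a cleaner computation and no need for the ODE argument at the end; what yours buys is a slightly more conceptual explanation of \emph{why} the top polar equation degenerates (the perfect square $(\kappa^2j^2+\pi^2r^2)^2$ vanishing at $j=-\mu$ is precisely the indicial root $-r/2$), plus an explicit justification of the linear independence of $f_1,f_2$ and of $f_1\neq0$, which the paper asserts without comment. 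Both arguments rest on the same two pillars: \lemref{singular} to realize a prescribed singular part, and $S_4(\G)=\{0\}$ to upgrade vanishing at $\infty$ to vanishing identically.
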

\begin{proof}
If $r$ is even, then $\G=\SL$ and we set $p=\exp(2i\pi \tau)$ and $\alpha=2 i\pi$, while if $r$ is odd, then $\G=\SL^2$ and we set $p=\exp(i\pi \tau)$ and $\alpha=i\pi$. For $g\in M_{-2}'(\G)$, write
\[
g(\tau)=\sum_{n\geq n_0}\, a_np^n\,,  \mbox{ and }\ E_4(\tau)=\sum_{n\geq 0}\,b_np^n,
\]
so that
\[
g(\tau)E_4(\tau)=\sum_{n\geq n_0}\,\sum_{s+t=n}\,a_s b_t\,p^n.
\]
Since $f_1=\pi^2r^2f_0+g'$, we have
	\begin{equation}\label{coeff}
		 f_1(\tau)= c+ \frac{\pi^2 r^2}{\alpha} \sum_{n\geq n_0, n\neq 0}\frac{1}{n}\sum_{s+t=n}a_s b_t\,p^n + \alpha \sum_{n\geq n_0}\,n a_n\,p^n,
	\end{equation}
	where $c$ is a constant. To eliminate the singular part of $f_1$, we must have
\[
\frac{\pi^2 r^2}{\alpha n}\sum_{s+t=n}a_s b_t \,+\, \alpha\, n\,  a_n = 0\,,\ \   n_0 \leq n \leq -1,
\]
that is,
\begin{equation}\label{sys}
  \frac{-\pi^2 r^2}{\alpha^2 n^2}\sum_{s=n_0}^{n}a_s b_{n-s} \,=\,   a_n  \hspace{1cm}  n_0 \leq n \leq -1.
\end{equation}

Write  $\displaystyle X=(a_{-1},\ldots a_{-n_0})^t$ and set, for $ 1 \leq k \leq -n_0 $ and $ 1 \leq l \leq -n_0$,
\[
B_{k\, l}= \frac{-\pi^2 r^2}{\alpha^2 k^2} b_{l-k}\, \mbox{ if } \,l\geq k\, \mbox{ and 0 otherwise}
\]
Then the linear system \eqref{sys} is equivalent to $\displaystyle BX = X$ where $B$ is the upper triangular matrix $B=(B_{k,l})$. Since $b_0=1$, the last entry of the diagonal is given by
\[
B_{-n_0,-n_0}=\frac{-\pi^2r^2}{\alpha^2n_0^2}.
\]
We now choose $n_0=-r/2$ if $r$ is even, and $n_0=-r$ if $r$ is odd so that
$B_{-n_0,-n_0}=1$. Therefore 1 is an eigenvalue of $B$ and a nontrivial eigenvector $X$ exists. Since the diagonal entries of the matrix $B$ are all distinct from one, except the last entry, we must have $a_{-n_0}\neq 0$. Otherwise the eigenvector $X$ will be zero, which is impossible. Hence $X$ is uniquely determined if we require that $a_{-n_0}=1$. According to \lemref{singular}, there exists a unique $g\in M_{-2}'(\G)$ such that
\[
g(\tau)\,=\,\sum_{n=n_0}^{-1}\,a_n\,p^n\, + \mbox{O}(1)\,,\ \  a_{n_0}=1.
\]
Now,  if $c=f_1(\infty) =0$, the proof is complete. However, if $c\neq 0$, let $\displaystyle F_1(\tau)=f_1(\tau) - c$ and $\displaystyle F_2(\tau)= -2g(\tau)+\tau F_1(\tau)$, $\tau \in \fH$.
We see that $F_1(\infty) =0$, and similarly to \eqref{f1-f2}, we have
\begin{equation}\label{F1-F2}
	F_2''+ \pi^2 r^2\,E_4\,F_2 \,=\, \tau (F_1''+ \pi^2 r^2\,E_4\,F_1).
\end{equation}
In the meantime,
\[
F_1''+ \pi^2 r^2\,E_4\,F_1 = f_1''+ \pi^2 r^2\,E_4\,f_1 - c \pi^2 r^2\,E_4 \in M_4(\G).
\]
Since $F_1(\infty) =0$, we have  $\displaystyle F_1''+ \pi^2 r^2\,E_4\,F_1 \in S_4(\G) =\{0\}$, and so $F_2''+ \pi^2 r^2\,E_4\,F_2 \,=\, 0$ using \eqref{F1-F2}. Hence  $F_1$ and $F_2$ are the required linearly independent solutions to \eqref{r-equ}.

 	\end{proof}

\section{Examples}
  We will illustrate how the algorithm of constructing the solutions works for few values of $r$.
  We will use the Ramanujan identities from \secref{notation} at will in the following calculations. We also use Maple software for the needed calculations.

 \noindent {\bf \underline{The case $r=1$:}}

 In this case, the weight -2 modular form $g$ with a simple at $\infty$ is unique up to a constant factor. This is due to the fact that $\dim M_4(\SL^2)=1$.
 We take $\displaystyle g_1=E_4/\Delta^{1/2}$. From the $p-$expansion, the modular function $\disp f_0(\tau)=\int_i^{\tau}\,\frac{E_4^2(z)}{\Delta^{1/2}(z)}\,dz$ should be a degree one polynomial of the Hauptmodul $E_6/\Delta^{1/2}$. As $E_6(i)=0$, we get
 \[
 f_0(\tau)=\int_i^{\tau}\,\frac{E_4^2(z)}{\Delta^{1/2}(z)}\,dz=\frac{-1}{i\pi}\frac{E_6}{\Delta^{1/2}}.
   \]
    Therefore, the solutions to $y''+\pi^2 E_4y=0$ are given by
    \[
    f_1=\pi^2 f_0+g_1'=\frac{i\pi}{3\Delta^{1/2}}(E_2E_4-E_6)=\frac{-1}{2}\frac{E_4'}{\Delta^{1/2}},
    \]
    and
    \[
    f_2=-2g_1+\tau f_1=-2\frac{E_4}{\Delta^{1/2}}-\frac{\tau}{2}\frac{E_4'}{\Delta^{1/2}}.
    \]
    Moreover, the equivariant solution to $\{h,\tau\}=2\pi^2E_4$  is given by
    \[
    h_1(\tau)=\tau+\frac{6/i\pi}{E_2-\frac{E_6}{E_4}}=\tau +\frac{4E_4}{E_4'}.
    \]

  \noindent {\bf \underline{The case $r=2$:}} \\

    In this case we also have a unique weight $-2$ form given by
    $\disp g_2=E_4E_6/\Delta$ because $M_{10}(\SL)$ is one-dimensional. We also have
    \[
    \int_i^{\tau}\,\frac{E_4^2E_6}{\Delta}\,dz =\frac{-1}{2\pi i}\left(\frac{E_4^3}{\Delta}-1728\right).
    \]
    Therefore, a solution to $y''+(2\pi)^2 E_4y=0$ with no constant term is given by
   \[
    f_1=4\pi^2f_0+g_2'=\frac{\pi i}{6\Delta}(6E_4^3-2E_2E_4E_6-4E_6^2)-1488\pi i.
   \]
   The second solution is given by $f_2=-2g_2+\tau f_1$. The corresponding Schwarzian equation $\{h,\tau\}=8\pi^2E_4$ has the equivariant solution
   \[
   	h_2(\tau)=\tau +\frac{6/i\pi}{E_2-\frac{E_6}{E_4}-\frac{720\Delta}{E_4E_6}}.
   \]

   \noindent {\bf \underline{The case $r=3$:}} \\

   The situation is a little bit different as $M_{16}(\SL^2)$ is two-dimensional. The $B-$matrix is given by
   \[
   B=\begin{bmatrix}
   	9&0&2160\\0&9/4&0\\0&0&1
   \end{bmatrix},
   \]
   with the eigenvector for the eigenvalue 1 given by
   \[X=\begin{bmatrix}
   	-270\\0\\1
   \end{bmatrix}.
   \]
   We look for the weight $-2$ form of the form
   \[
 g_3=\frac{E_4^4}{\Delta^{3/2}}+xg_1,
   \]
   where $g_1$ come from the case $r=1$. This allows us to use the integration   in the previous cases. The value of the parameter $x$ is such that the principal part of $g_3$ corresponds to the eigenvector $X$.
   We find
   \[
   g_3=\frac{E_4^4}{\Delta^{3/2}}-1226g_1=\frac{E_4^4}{\Delta^{3/2}}-1226\frac{E_4}{\Delta^{1/2}}.
   \]
   From the $p-$expansion, a primitive of $\disp E_4^5/\Delta^{3/2}$ should be a polynomial of degree 3 of the Hauptmodul $E_6/\Delta^{1/2}$. One is led to solve a simple linear system giving
   \[
   \int_i^{\tau}\frac{E_4^5}{\Delta^{3/2}}\,dz=\frac{1}{2\pi i}\left(-\frac{2}{3}\frac{E_6^3}{\Delta^{3/2}}-3456\frac{E_6}{\Delta^{1/2}}\right).
   \]
   The first solution to $y''+(3\pi)^2E_4y=0$ is given by
   \[
   f_1=\frac{\pi i}{3}\, \frac{9E_6^3+15006E_6\Delta-E_2E_4^4-8E_4^3E^6+1266E_2E_4\Delta}{\Delta^{3/2}}.
   \]
    The second solution is given by $f_2=-2g_3+\tau f_1$. The  equivariant solution to the corresponding Schwarzian equation is given by
   \[
   h_3(\tau)=\tau+\frac{6/i\pi}{E_2-\frac{E_6}{E_4}-\frac{720\Delta}{E_4E_6}+
   	\frac{95800320\Delta^2}{77E_4^4E_6+211E_4E_6^3}}.
   \]

   \noindent {\bf \underline{The case $r=4$:}} \\

The $B-$matrix is given by
\[
\begin{bmatrix}
	4&960\\0&1
\end{bmatrix}
\]
and an eigenvector for the eigenvalue 1 is given by $\disp X=\begin{bmatrix}
	-320\\1
\end{bmatrix}$. To determine the weight -2 modular form whose principal part correspond to $X$. A simple calculation shows that
\[
g_4=\frac{E_4^4E_6}{\Delta^2}-824g_2=\frac{E_4^4E_6}{\Delta^2}-824\frac{E_4E_6}{\Delta}.
\]
The first solution to $y''+(4\pi)^2E_4y=0$ is given by
\[
f_1=\frac{-1}{648\Delta}(219E_4^6-641E_4^3E_6^2+113E_2E_4^4E_6+103E_2E_4E_6^3+206E_6^4),
\]
and the second solution is $f_2=-2g_4+\tau f_1$. As for the corresponding Schwarzian equation, the equivariant solution is given by
\[
h_4=\tau+\frac{6/i\pi}{E_2-\frac{E_6}{E_4}-\frac{720\Delta}{E_4E_6}+
	\frac{95800320\Delta^2}{77E_4^4E_6+211E_4E_6^3}-\frac{9146248151040\Delta^3}{E_4E_6(8701E_4^6+31774E_4^3E_6^2+21733E_6^4)}}.
\]
We notice that for each $r$, the form $g_r$ of weight -2 involves the previous $g_i$ with $i$ of the same parity as $r$. A simple explanation comes from the $p-$expansions of the previous sections, and also from the dimension formulas for the spaces $M_{12k-2}(\G)$. 
Also, we notice how the equivariant solutions $h_r$ are related. This suggest that one should be able to write down a Maple code to produce inductively the solutions for each $r$.

As for the solutions $h_r$ to the Schwarzian equation, they have the shape
\[
h_r(\tau)=\tau+\frac{6/i\pi}{E_2+f}\,,
\]
where $f$ is a weight 2 meromorphic modular form. In addition,  $\disp \frac{i\pi}{6}(E_2+f)$ is a quasi-modular form of weight 2 and depth 1 and whose quasi-modular polynomial is monic, and therefore $h_r$ is readily equivariant with no need of conjugation \cite{structure}.

%%%%%%%%%%%%%%%%%%%%%%%%%%%%%%%%%%%%%%%%%%%%%%%%%%%%%%%
%%%
%%%   References
%%%
%%%%%%%%%%%%%%%%%%%%%%%%%%%%%%%%%%%%%%%%%%%%%%%%%%%%%%%


\begin{thebibliography}{aaaa}
	\bibitem{structure} A. Elbasraoui; A. Sebbar. Equivariant forms: Structure and geometry. Canad. Math. Bull. Vol. {\bf 56} (3), (2013) 520--533.
	\bibitem{rational} A. Elbasraoui; A. Sebbar. Rational equivariant forms. Int. J. Number Th. 08  No. 4(2012), 963--981.
	\bibitem{ford} L. R. Ford. Automorphic functions. McGraw-Hill 1929
	\bibitem{mason-franc} C. Franc; G. Mason.  Hypergeometric series, modular linear differential equations and vector-valued modular forms. Ramanujan J. 41 (2016), no. 1-3, 233--267.
	 \bibitem{grabner} P. J. Grabner. Quasimodular forms as solutions of modular differential equations.  Int. J. Number Theory 16 (2020), no. 10, 2233--2274.
	 	\bibitem{hurwitz} A. Hurwitz, Adolf: Ueber die Differentialgleichungen dritter Ordnung, welchen
	\bibitem{ka-ko} M. Kaneko; M. Koike, On modular forms arising from a differential equation of hypergeometric
	type. Ramanujan J. 7(2003), no. 1-3, 145--164.
	\bibitem{ka-et-al} M. Kaneko; K. Nagatomo; Y. Sakai. The third order modular linear differential equations. J. Algebra 485 (2017), 332--352.
	\bibitem{ka-za} M. Kaneko; D. Zagier. Supersingular j-invariants, hypergeometric series, and Atkin's orthogonal polynomials. Computational perspectives on number theory (Chicago, IL, 1995), 97--126, AMS/IP Stud. Adv. Math., 7, Amer. Math. Soc., Providence, RI, 1998.
	\bibitem{klein} F. Klein,  Ueber Multiplicatorgleichungen. (German) Math. Ann. 15 (1879), no. 1, 86--88.
	\bibitem{milas} A. Milas, Ramanujan’s “Lost Notebook” and the Virasoro algebra. Comm. Math. Phys. 251(2004),
	no. 3, 657--678.
	\bibitem{mukhi} S. Mathur, S. Mukhi, and A. Sen, On the classification of rational conformal field theories. Phys. Lett.
	B 213(1988), no. 3, 303--308.
	\bibitem{mathann} J. McKay; A. Sebbar.  Fuchsian groups, automorphic functions
	and Schwarzians. Math. Ann. 318 (2), (2000) 255--275.
	\bibitem{mason} G. Mason, 2-dimensional vector-valued modular forms,
	Ramanujan J (2008) 17: 405--427.
	\bibitem{nakaya} T. Nakaya. On modular solutions of certain modular differential equation and supersingular polynomials. Ramanujan J. 48 (2019), no. 1, 13–-20. 
	%\bibitem{nehari} Z. Nehari, (1949), The Schwarzian derivative and schlicht functions, Bulletin of the American Mathematical Society, 55 (1949) 545--551.
%	\bibitem{newman} M. Newman. Classification of normal subgroups of the modular group.
%	\bibitem{ov-ta} V. Ovsienko; S. Tabachnikov. What is the Schwarzian derivative? AMS Notices, 56 (01): 34--36
	\bibitem{ram} S. Ramanujan, On certain arithmetical functions. Trans. Cambridge Philos. Soc. 22(1916), 159–184.
	\bibitem{rankin} R. Rankin. Modular Forms and Functions, Cambridge Univ. Press, Cambridge, 1977.
%	\bibitem{s-g} G. Sansone, J. Gerretsen. Lectures on the theory of functions of a complex variable. II,	Geometric theory. Wolters--Noordhoff Publishing, Groningen 1969.
	
	%	\bibitem{lang} S. Lang;  Elliptic functions.  Second edition. Graduate Texts %in Mathematics, 112. Springer-Verlag, New York, 1987.
	\bibitem{critical} A. Sebbar; H. Saber. On the critical points of modular forms.  J. Number Theory 132 (2012), no. 8, 1780–1787.
	\bibitem{vvmf} A. Sebbar; H. Saber. Equivariant functions and vector-valued modular forms. Int. J. Number Theory 10 (2014), no. 4, 949--954.
	\bibitem{kyushu} A. Sebbar; H. Saber. On the existence of vector-valued automorphic forms. Kyushu J. Math. 71 (2017), no. 2, 271--285.
	\bibitem{forum}  A. Sebbar; H. Saber. Automorphic Schwarzian equations.  Forum Math. DOI: 10.1515/forum-2020-0025
	\bibitem{ramanujan} Automorphic Schwarzian equations and integrals of weight 2 forms. The Ramanujan J. https://doi.org/10.1007/s11139-020-00348-w.
	\bibitem{shimura} G. Shimura; Introduction to the Arithmetic Theory of Automorphic Functions, Princeton University Press, Princeton, New Jersey, 1971.
%	\bibitem{wohl} K.  Wohlfahrt. An extension of F. Klein's level concept. Illinois J. Math. 8 1964 529-535.
%	\bibitem{yang} Y. Yang. Schwarzian differential equations and Hecke eigenforms on Shimura curves. 	Compos. Math. 149 (2013), no. 1, 1--31.
\end{thebibliography}
\end{document}